\documentclass[12pt]{article}
\usepackage{mathtools,amsthm,amsmath,amsfonts,amssymb,tikz-cd,enumitem, graphicx,mathrsfs,bbm,stmaryrd,xcolor}

\tikzcdset{scale cd/.style={every label/.append style={scale=#1},
    cells={nodes={scale=#1}}}}
\usepackage{url}
\usepackage{esvect}
\usepackage{setspace}
\usepackage[utf8]{inputenc}
\usepackage{CJKutf8}
\counterwithin{equation}{section}
\usepackage[pdfencoding=auto,psdextra,colorlinks=true,linkcolor=blue,citecolor=blue,urlcolor = orange]{hyperref}
\usepackage[margin = 1.25in]{geometry}
\usepackage{kantlipsum}
\allowdisplaybreaks
\newtheorem{theorem}{Theorem}[section]
\newtheorem{definition}[theorem]{Definition}

\newtheorem{remark}[theorem]{Remark}
\newtheorem{proposition}[theorem]{Proposition}

\newtheorem{notations}[theorem]{Notations}
\newcommand{\citep}[1]{\cite{#1}}

\newcommand{\rank}{\mathrm{rank}\hspace{+0.7mm}}

\newcommand{\edmp}{\text{\normalfont End}}
\newcommand{\trace}{\text{\normalfont Tr}}

\newcommand{\AuthorInfo}{}

\newcommand{\AddAuthor}[2]{%
 \appto{\AuthorInfo}{%
   \noindent\textbf{#1}\\#2\par\vspace{1em}
  }%
}

\AddAuthor{}{
Beijing International Center for Mathematical Research, Peking University\\
\texttt{\color{orange}hzhuang@pku.edu.cn}
}

\begin{document}
\title{Bott connection in the mapping cone case}
\date{\today}
\author{Hao Zhuang}
\maketitle
\begin{abstract}
We generalize the concept of Bott connection to the de Rham mapping cone situation. As the main results, when the given closed smooth manifold is foliated, we have the mapping cone version of the Bott vanishing theorem and an adiabatic limit of a family of mapping cone connections on the orthogonal complement of the foliation. The second result shows the necessity of a refined metric if we want to see more interactions between geometry and topology in the mapping cone situation. 
\end{abstract}

\tableofcontents
\section{Introduction}
The Bott connection was defined by Bott \cite[Definition 6.1]{Bott_lecture_notes} for a vanishing theorem of Pontryagin classes. For a specific product of some Pontryagin forms associated to the Bott connection on an integrable subbundle of the tangent bundle, Bott showed that it is equal to $0$ under a constraint on degrees. Then by the Chern-Weil transgression, the cohomology class of this product vanishes. This is the Bott vanishing theorem. 

The Bott connection also appears in studying the relation between the $\hat{A}$-genus and the existence of a metric of positive leaf-wise scalar curvature. In \cite[Theorem 1.1]{liuandzhang1999}, Liu and Zhang found the adiabatic limit of the restriction of the perturbed Levi-Civita connection on the orthogonal complement of the integrable subbundle. The limit is given by the average of the Bott connection and its dual. An important further development of \cite{liuandzhang1999} is a purely differential-geometric proof \cite[Theorem 0.1]{zhang2017positive_scalar_foliation} of the Connes vanishing theorem \cite[Chapter 3 \S 7.$\beta$ Corollary 10]{connes1995noncommutative}.

Now, given a closed smooth manifold $M$, a closed smooth $2$-form $\omega$ on $M$, a Riemannian metric $g$ on $M$, an integrable subbundle $F\subseteq TM$ and its orthogonal complement $F^\perp$ with respect to $g$, we generalize the study of the Bott connection to the de Rham mapping cone situation. 
\begin{notations}\normalfont
     We let $\Omega^k(M)$ be the space of smooth $k$-forms on $M$, $d$ be the de Rham exterior derivative. For any vector bundle $E$ on $M$, we let $\Omega^k(M,E)$ be the space of smooth $E$-valued $k$-forms on $M$. In addition, we use $\Gamma(E)$ to denote the space of smooth sections of $E$. 
\end{notations}
  Since $\omega$ is $d$-closed, we have the following mapping cone cochain complex 
\begin{align}\label{cochain complex}
    d^\omega: \Omega^k(M)\oplus\Omega^{k-1}(M)&\to\Omega^{k+1}(M)\oplus\Omega^{k}(M)\nonumber\\
    (\alpha,\beta)&\mapsto (d\alpha + \omega\wedge\beta, -d\beta).
\end{align}
Its cohomology is called the mapping cone cohomology. In the special case where $\omega$ is symplectic and integral, (\ref{cochain complex}) computes the primitive cohomology of a symplectic manifold \cite{tanaka_tseng_2018, tty3rd, tty1st, tty2nd}. 

Based on \cite{tseng_and_zhou_symplectic_flat_connection_and_twisted_primitive2022, tseng_zhou_symplectic_flat_functional_characteristic_classes2022,tseng_and_zhou_2025mapping_yang_mills}, given a connection $\nabla$ on $F^\perp$ and any $B\in\Gamma(\edmp(F^\perp))$, we have a mapping cone connection 
\begin{align}
    \mathbb{A}: \Omega^{i}(M, F^\perp)\oplus\Omega^{i-1}(M, F^\perp) & \to \Omega^{i+1}(M, F^\perp)\oplus\Omega^{i}(M, F^\perp) \nonumber\\
    (a,b)&\mapsto (\nabla a + \omega\wedge b, Ba-\nabla b).
\end{align}
Following \cite[(1.15)]{wittendeformationweipingzhang} and \cite[Theorem 1.6]{transgression_primitive_2025}, we identify $\mathbb{A}^{2k}$ with an element in $\Omega^{2k}(M, \edmp(F^\perp))\oplus\Omega^{2k-1}(M, \edmp(F^\perp))$, and choose the function
\begin{align}
    f(x) = \dfrac{1}{2}\log\left(1-\dfrac{x^2}{4\pi^2}\right).
\end{align}
     Using the trace map 
     \begin{align}
         \mathbf{Tr}: \Omega^{i}(M, \edmp(F^\perp))\oplus\Omega^{i-1}(M, \edmp(F^\perp))&\to\Omega^{i}(M)\oplus\Omega^{i-1}(M) \nonumber\\
         (a,b)&\mapsto (\mathrm{Tr}(a), \mathrm{Tr}(b)),
     \end{align}
     we obtain the total mapping cone Pontryagin form 
     \begin{align}
    p(\mathbb{A})\coloneqq \exp\left(\mathbf{Tr}(f(\mathbb{A}^2))\right).
\end{align}
By \cite[Theorem 1.6]{transgression_primitive_2025}, it represents a mapping cone cohomology class. 

We write $p(\mathbb{A})$ into
\begin{align}
    (1,0)+p_1(\mathbb{A}) + p_2(\mathbb{A}) + \cdots,\ \text{with} \ p_i(\mathbb{A})\in\Omega^{4i}(M)\oplus\Omega^{4i-1}(M).
\end{align}
The first main result is following Bott type vanishing theorem. 
\begin{theorem}\label{main result 1}
    When $2i_1+\cdots+2i_k > \dim M - \rank F$, the product 
    \begin{align}
        p_{i_1}(\mathbb{A})p_{i_2}(\mathbb{A})\cdots p_{i_k}(\mathbb{A})
    \end{align}
    vanishes in the cohomology of (\ref{cochain complex}). 
\end{theorem}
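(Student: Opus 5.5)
The plan is to use the fact that the mapping cone class of $p(\mathbb{A})$ does not depend on the choice of the pair $(\nabla,B)$. I would then pick a particularly simple representative for which the product is identically zero at the level of forms, exactly as in Bott's original argument.

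\emph{Step 1 (independence of choices).} By the transgression formula of \cite[Theorem 1.6]{transgression_primitive_2025}, for two mapping cone connections $\mathbb{A}_0,\mathbb{A}_1$ built from $(\nabla_0,B_0)$ and $(\nabla_1,B_1)$, the difference $p(\mathbb{A}_1)-p(\mathbb{A}_0)$ is $d^\omega$-exact. Indeed, the linear path $(\nabla_t,B_t)$ gives a path of mapping cone connections, and the Chern--Weil transgression applies to it. Applying this degree by degree, each $p_i(\mathbb{A})$ has a class independent of $(\nabla,B)$. Products of $d^\omega$-closed elements should be taken with the product on $\Omega^\bullet(M)\oplus\Omega^{\bullet-1}(M)$ used in \cite{tseng_and_zhou_symplectic_flat_connection_and_twisted_primitive2022,transgression_primitive_2025}. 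With that product, changing one factor by a $d^\omega$-exact term changes the product by a $d^\omega$-exact term, since the product satisfies a Leibniz rule for $d^\omega$. I would check this Leibniz property explicitly for the product in use, because it is what makes the cohomology class of $p_{i_1}(\mathbb{A})\cdots p_{i_k}(\mathbb{A})$ well defined.

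\emph{Step 2 (good representative).} I would choose $\nabla$ to be a Bott connection on $F^\perp\cong TM/F$, that is, $\nabla_X Y = \pi^\perp[X,Y]$ for $X\in\Gamma(F)$, $Y\in\Gamma(F^\perp)$, extended arbitrarily in the $F^\perp$ directions. I would also take $B=0$. Computing directly,
\begin{align*}
\mathbb{A}^2(a,b)=\mathbb{A}(\nabla a+\omega\wedge b,\,-\nabla b)=(\nabla^2 a+d\omega\wedge b,\ \nabla^2 b)=(R^\nabla a,\ R^\nabla b),
\end{align*}
using $d\omega=0$. So under the identification of \cite[(1.15)]{wittendeformationweipingzhang}, $\mathbb{A}^2$ corresponds to $(R^\nabla,0)$. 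Powers and functions of an element of the form $(x,0)$ stay of the form $(\cdot,0)$, so $f(\mathbb{A}^2)=(f(R^\nabla),0)$. Hence $p(\mathbb{A})=(p(\nabla),0)$, where $p(\nabla)$ is the classical total Pontryagin form, and the product becomes $(p_{i_1}(\nabla)\cdots p_{i_k}(\nabla),0)$. The point to verify carefully is that the cone product of two elements with vanishing second component has vanishing second component. This holds for the product $(a_1,b_1)(a_2,b_2)=(a_1a_2,\ b_1a_2\pm a_1b_2)$; I expect this bookkeeping of identifications and signs to be the main obstacle, rather than any analytic difficulty.

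\emph{Step 3 (classical Bott vanishing).} Integrability of $F$ together with the Jacobi identity gives $R^\nabla(X,Y)=0$ for $X,Y\in\Gamma(F)$. So every entry of $R^\nabla$ lies in the ideal $I$ generated by $1$-forms annihilating $F$. Locally $I$ is generated by $q=\dim M-\rank F$ one-forms, so $I^{q+1}=0$. The form $p_{i_1}(\nabla)\cdots p_{i_k}(\nabla)$ is a polynomial of degree $2(i_1+\cdots+i_k)$ in the entries of $R^\nabla$, so it lies in $I^{2(i_1+\cdots+i_k)}$, which vanishes when $2i_1+\cdots+2i_k>q$. Thus the representative is $(0,0)$, and by Step 1 the product vanishes in the cohomology of (\ref{cochain complex}).
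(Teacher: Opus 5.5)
Your proposal is correct and follows essentially the same route as the paper. Both arguments use the transgression formula to pass to a convenient representative, namely the mapping cone Bott connection with $B=0$. For it, $\mathbb{A}^{2k}$ corresponds to $\bigl(\nabla^{2k},0\bigr)$, and Bott's count (the $k$-th power of the curvature lies in $\Gamma(\Lambda^k F^{\perp,*})\wedge\Omega^*(M,\edmp(F^\perp))$) makes the product vanish as a form. You also check explicitly that $d^\omega$ is a derivation for the cone product, so that the class of the product does not depend on the connection; the paper uses this only implicitly, and it does hold.
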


Similar to \cite[Section 1.7.1]{wittendeformationweipingzhang} and \cite[Section 1.5.1]{weipingzhangnewedition}, the main idea of proving Theorem \ref{main result 1} is to use the mapping cone Bott connection: 
\begin{align}
    \widetilde{\mathbb{A}}^{F^\perp}: \Omega^i(M,F^\perp)\oplus\Omega^{i-1}(M,F^\perp)&\to\Omega^{i+1}(M, F^\perp)\oplus\Omega^i(M, F^\perp)  \nonumber\\
        (a, b)&\mapsto (\widetilde{\nabla}^{F^\perp} a + \omega\wedge b, -\widetilde{\nabla}^{F^\perp} b), 
\end{align}
where $\widetilde{\nabla}^{F^\perp}$ is the original Bott connection \cite[Definition 1.13]{wittendeformationweipingzhang} on $F^\perp$. We let $\widetilde{\mathbb{A}}^{F^\perp,*}$ be its associated dual connection (See Proposition \ref{dual connection verification process}) with respect to the extension of $g$ to $\sum_{i = 0}^{\dim M + 1}\Omega^i(M,F^\perp)\oplus\Omega^{i-1}(M,F^\perp)$ (See (\ref{extend the metric})). 

Now, we let $g^{F^\perp}$ (resp. $g^F$) be the restriction of $g$ on $F^\perp$ (resp. $F$), and $P^\perp$ be the orthogonal projection from $TM$ to $F^\perp$. Then, we let $\varepsilon>0$ be a parameter and $\nabla^{TM, \varepsilon}$ be the Levi-Civita connection on $TM$ associated to 
$g^F\oplus \frac{1}{\varepsilon}g^{F^\perp}$. 

Given $\Phi\in\Gamma(\edmp(TM))$ skew-adjoint with respect to the original $g$, we see 
\begin{align}
    \Phi^\varepsilon \coloneqq P\Phi + \varepsilon P^\perp\Phi 
\end{align}
is skew-adjoint with respect to $g^F\oplus \frac{1}{\varepsilon}g^{F^\perp}$. Then, we have the mapping cone analogue of the Levi-Civita connection 
\begin{align}
     \mathbb{A}^{TM,\varepsilon}: \Omega^i(M, TM)\oplus\Omega^{i-1}(M, TM)&\to \Omega^{i+1}(M, TM)\oplus\Omega^i(M, TM) \nonumber\\
     (a,b)&\mapsto (\nabla^{TM,\varepsilon}a + \omega\wedge b, \Phi^\varepsilon a - \nabla^{TM,\varepsilon} b).
 \end{align}
After extending $P^\perp$ to $\Omega^i(M, TM)\oplus\Omega^{i-1}(M, TM)$ (See (\ref{extend the projection naturally on F perp})), we get $P^\perp\mathbb{A}^{TM,\varepsilon} P^\perp$, i.e., the restriction of 
$\mathbb{A}^{TM,\varepsilon}$ to $F^\perp$. With natural maps 
\begin{align}
    \pi_1: \Omega^i(M,F^\perp)\oplus\Omega^{i-1}(M,F^\perp)&\to\Omega^i(M,F^\perp),\\
    \pi_2: \Omega^i(M,F^\perp)\oplus\Omega^{i-1}(M,F^\perp)&\to\Omega^{i-1}(M,F^\perp),
\end{align}
the second main result is the following adiabatic limit (cf. \cite[Theorem 1.1]{liuandzhang1999}). 
\begin{theorem}\label{main result 2}
For any $(a,b)\in\Omega^i(M,F^\perp)\oplus\Omega^{i-1}(M,F^\perp)$, any $X_1, \cdots, X_{i+1}\in\Gamma(F)$ and $Y\in\Gamma(F^\perp)$, we have 
\begin{align}
    g\left(X_1\lrcorner\cdots\lrcorner X_{i+1}\lrcorner \pi_1\mathbb{A}^{F^\perp,\varepsilon}(a,b), Y\right) &\to \dfrac{1}{2}g\left(X_1\lrcorner\cdots\lrcorner X_{i+1}\lrcorner\pi_1(\widetilde{\mathbb{A}}^{F^\perp}+\widetilde{\mathbb{A}}^{F^\perp,*})(a,b), Y\right)\\
    g\left(X_1\lrcorner\cdots\lrcorner X_{i}\lrcorner \pi_2\mathbb{A}^{F^\perp,\varepsilon}(a,b), Y\right) &\to \dfrac{1}{2}g\left(X_1\lrcorner\cdots\lrcorner X_i\lrcorner\pi_2(\widetilde{\mathbb{A}}^{F^\perp}+\widetilde{\mathbb{A}}^{F^\perp,*})(a,b), Y\right) \label{the second line of main result second}
\end{align}
    uniformly on the closed smooth $M$ when $\varepsilon\to 0$. 
\end{theorem}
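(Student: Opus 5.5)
We reduce the statement to the classical adiabatic limit of Liu--Zhang \cite[Theorem 1.1]{liuandzhang1999}, applied componentwise. First we write out $\mathbb{A}^{F^\perp,\varepsilon}=P^\perp\mathbb{A}^{TM,\varepsilon}P^\perp$ on $(a,b)\in\Omega^i(M,F^\perp)\oplus\Omega^{i-1}(M,F^\perp)$. Since $P^\perp$ acts only on the bundle factor and commutes with wedging by $\omega$, we get
\begin{align*}
\pi_1\mathbb{A}^{F^\perp,\varepsilon}(a,b)&=P^\perp\nabla^{TM,\varepsilon}a+\omega\wedge b,\\
\pi_2\mathbb{A}^{F^\perp,\varepsilon}(a,b)&=P^\perp\Phi^\varepsilon a-P^\perp\nabla^{TM,\varepsilon}b=\varepsilon P^\perp\Phi a-P^\perp\nabla^{TM,\varepsilon}b.
\end{align*}
Next we compute $\widetilde{\mathbb{A}}^{F^\perp,*}$ explicitly from Proposition \ref{dual connection verification process}. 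Our expectation is that it has the form $(a,b)\mapsto(\widetilde{\nabla}^{F^\perp,*}a+\omega\wedge b,\,-\widetilde{\nabla}^{F^\perp,*}b)$, with possibly an extra zeroth-order term. Here $\widetilde{\nabla}^{F^\perp,*}$ is the dual of the Bott connection with respect to $g^{F^\perp}$. If an extra term built from $\omega$ does appear, we will have to track it, since the right-hand sides contain $\tfrac12(\widetilde{\mathbb{A}}+\widetilde{\mathbb{A}}^*)$. Granting this form, we get
\begin{align*}
\tfrac12\pi_1(\widetilde{\mathbb{A}}^{F^\perp}+\widetilde{\mathbb{A}}^{F^\perp,*})(a,b)&=\tfrac12(\widetilde{\nabla}^{F^\perp}+\widetilde{\nabla}^{F^\perp,*})a+\omega\wedge b,\\
\tfrac12\pi_2(\widetilde{\mathbb{A}}^{F^\perp}+\widetilde{\mathbb{A}}^{F^\perp,*})(a,b)&=-\tfrac12(\widetilde{\nabla}^{F^\perp}+\widetilde{\nabla}^{F^\perp,*})b.
\end{align*}

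With these formulas, the $\omega\wedge b$ terms agree exactly on both sides. The term $\varepsilon P^\perp\Phi a$ is $O(\varepsilon)$ uniformly, because $M$ is closed and $\Phi$ and $a$ are smooth. The theorem therefore reduces to a single claim, applied with $c=a$ (degree $i$) and $c=b$ (degree $i-1$). For $c\in\Omega^j(M,F^\perp)$, $X_1,\dots,X_{j+1}\in\Gamma(F)$ and $Y\in\Gamma(F^\perp)$, we need
\begin{align*}
g\bigl(X_1\lrcorner\cdots\lrcorner X_{j+1}\lrcorner P^\perp\nabla^{TM,\varepsilon}c,\,Y\bigr)\to\tfrac12 g\bigl(X_1\lrcorner\cdots\lrcorner X_{j+1}\lrcorner(\widetilde{\nabla}^{F^\perp}+\widetilde{\nabla}^{F^\perp,*})c,\,Y\bigr)
\end{align*}
uniformly.

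To prove the claim, we expand the exterior covariant derivative as an alternating sum. We get terms $\nabla^{TM,\varepsilon}_{X_l}\bigl(c(X_1,\dots,\widehat{X_l},\dots)\bigr)$, together with terms $c([X_l,X_m],\dots)$. The bracket terms are metric-independent and appear identically for any connection, so they match on both sides; here integrability of $F$ keeps $[X_l,X_m]\in F$. The remaining terms are all of the form $P^\perp\nabla^{TM,\varepsilon}_X U$ with $X\in\Gamma(F)$ and $U\in\Gamma(F^\perp)$. For these we invoke the Koszul-formula computation of \cite{liuandzhang1999}. It gives $g(P^\perp\nabla^{TM,\varepsilon}_XU,Y)\to\tfrac12 g\bigl((\widetilde{\nabla}^{F^\perp}_X+\widetilde{\nabla}^{F^\perp,*}_X)U,Y\bigr)$ uniformly, with error $O(\varepsilon)$ involving only finitely many derivatives of the fixed smooth data.

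Taking $U=c(X_1,\dots,\widehat{X_l},\dots)$ and summing the alternating sum yields the claim. The overall factor $1/\varepsilon$ in the metric on $F^\perp$ is harmless, since both sides are paired against the fixed metric $g$.

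The main obstacle is the middle step: pinning down the exact form of the dual mapping cone connection $\widetilde{\mathbb{A}}^{F^\perp,*}$ under the extended metric (\ref{extend the metric}). The $\omega$-terms must combine to exactly $\omega\wedge b$ in the $\pi_1$ component and contribute nothing in the $\pi_2$ component. If the dual instead produces an adjoint of wedging by $\omega$, such as interior multiplication by the metric dual of $\omega$, the averaged right-hand side would change. We would then need to check that this term is killed after contracting with $X_1,\dots,X_{i+1}\in F$ and pairing with $Y$, or else that the statement's normalization absorbs it. The first thing to do is to compute the dual directly from Proposition \ref{dual connection verification process}.
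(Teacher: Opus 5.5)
Your proposal is correct and follows the paper's proof. Like the paper, you expand $P^\perp\mathbb{A}^{TM,\varepsilon}P^\perp$ componentwise, discard the $O(\varepsilon)$ term $\varepsilon P^\perp\Phi a$, match the $\omega\wedge b$ terms, and apply the Liu--Zhang limit to the covariant derivatives in the $F$-directions. The paper works on decomposable elements $\gamma\otimes U$ rather than through the alternating-sum formula, which is only a cosmetic difference.

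The obstacle you flag is settled by Proposition \ref{dual connection verification process}, which gives exactly $\widetilde{\mathbb{A}}^{F^\perp,*}(a,b)=(\widetilde{\nabla}^{F^\perp,*}a+\omega\wedge b,\,-\widetilde{\nabla}^{F^\perp,*}b)$. So no adjoint of $\omega\wedge$ appears, and your averaged formulas are the ones the paper uses.
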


This paper is organized as follows. In Section \ref{bott vanishing section}, we construct the mapping cone version of the Bott connection. Then, we plug it into the transgression formula to prove Theorem \ref{main result 1}. In Section \ref{adiabatic limit section}, we explain how we restrict the mapping cone connection associated with the perturbed Levi-Civita connection to $F^\perp$, and how we make sense of the dual of the mapping cone Bott connection. Then, we prove Theorem \ref{main result 2}. Finally, we comment on interacting between geometry and topology in the mapping cone situation.

\section{Bott type vanishing theorem}\label{bott vanishing section}
In this section, we first define the mapping cone Pontryagin class of $F^\perp$ based on \cite[Theorem 1.6]{transgression_primitive_2025}. Then, we recall the Bott connection on $F^\perp$ and define its mapping cone version. Finally, we prove Theorem \ref{main result 1} using the mapping cone Bott connection. The idea of the proof is from \cite[Section 1.7.1]{wittendeformationweipingzhang} and \cite[Section 1.5.1]{weipingzhangnewedition}. 

Recall that given any $B\in\Gamma(\edmp(F^\perp))$ and any connection 
\begin{align}
    \nabla: \Omega^i(M,F^\perp)\to\Omega^{i+1}(M,F^\perp)
\end{align}
on $F^\perp$, we have the mapping cone connection \cite{tseng_and_zhou_symplectic_flat_connection_and_twisted_primitive2022, tseng_zhou_symplectic_flat_functional_characteristic_classes2022, tseng_and_zhou_2025mapping_yang_mills} associated to $\nabla$ and $B$: 
\begin{align}
    \mathbb{A}: \Omega^{i}(M, F^\perp)\oplus\Omega^{i-1}(M, F^\perp) & \to \Omega^{i+1}(M, F^\perp)\oplus\Omega^{i}(M, F^\perp) \nonumber\\
    (a,b)&\mapsto (\nabla a + \omega\wedge b, Ba-\nabla b).
\end{align}
Following \cite[(1.15)]{wittendeformationweipingzhang}, we let 
\begin{align}
    f(x) = \dfrac{1}{2}\log\left(1-\dfrac{x^2}{4\pi^2}\right)
\end{align}
and define the associated mapping cone version of the total Pontryagin form as
\begin{align}\label{Pontryagin form definition}
    p(\mathbb{A})\coloneqq \exp\left(\textbf{Tr}(f(\mathbb{A}^2))\right).
\end{align}
\begin{remark}
    \normalfont 
    In (\ref{Pontryagin form definition}), we identify $\mathbb{A}^{2k}$ with an element in 
    \begin{align}
        \Omega^{2k}(M,\edmp(F^\perp))\oplus\Omega^{2k-1}(M,\edmp(F^\perp)). 
    \end{align}
    Let Tr be the trace map on $\Omega^i(M,\edmp(F^\perp))$, we extend it to 
    \begin{align}
    \textbf{Tr}: \Omega^{2k}(M,\edmp(E))\oplus\Omega^{2k-1}(M,\edmp(E))&\to\Omega^{2k}(M)\oplus\Omega^{2k-1}(M) \nonumber\\
    (a,b)&\mapsto (\trace(a), \trace(b)).
    \end{align}
    Also, we need the product structure
    \begin{align}
        (\alpha, \beta)\cdot (\gamma,\delta) = (\alpha\wedge\gamma, \beta\wedge\gamma + (-1)^{\deg(\alpha)}\alpha\wedge\delta)
    \end{align}
    on $\sum_{i = 0}^{\dim M + 1}\Omega^i(M)\oplus\Omega^{i-1}(M)$. Here, $\deg(\alpha)$ is the degree of $\alpha$. See \cite[Section 1.5]{bgv} and \cite[(2.5), Example 4.3]{transgression_primitive_2025} for details. 
\end{remark}
By \cite[Theorem 1.6]{transgression_primitive_2025}, $p(\mathbb{A})$ is $d^\omega$-closed. Also, given another mapping cone connection $\mathbb{A}'$, we have 
\begin{align}\label{transgression}
    p(\mathbb{A}) - p(\mathbb{A}') = d^\omega(\alpha, \beta)
\end{align}
for some $\alpha, \beta\in\Omega^*(M)$. Thus, $p(\mathbb{A})$ defines a mapping cone cohomology class.

By decomposing with respect to degrees, we see that 
\begin{align}
    p(\mathbb{A}) = (1,0) + p_1(\mathbb{A}) + p_2(\mathbb{A}) + \cdots 
\end{align}
with the $i$-th component $p_i(\mathbb{A})\in\Omega^{4i}(M)\oplus\Omega^{4i-1}(M)$ being $d^\omega$-closed. 

 Because of (\ref{transgression}), we just need to find an $\mathbb{A}'$ such that 
\begin{align}\label{the technique by Bott}
    p_{i_1}(\mathbb{A}')p_{i_2}(\mathbb{A}')\cdots p_{i_k}(\mathbb{A}') = (0, 0) 
\end{align}
(cf. \cite[(1.29)]{wittendeformationweipingzhang}). Then, we automatically have 
\begin{align}
    p_{i_1}(\mathbb{A})p_{i_2}(\mathbb{A})\cdots p_{i_k}(\mathbb{A}) = d^\omega(\alpha, \beta)
\end{align}
for some $\alpha, \beta\in\Omega^*(M)$ and finish the proof of Theorem \ref{main result 1}. 

Now we construct this $\mathbb{A}'$. Let $\nabla^{TM}$ be the Levi-Civita connection associated to the metric $g$ on $TM$. Let $F^\perp$ be the orthogonal complement of $F$ in $TM$ with respect to the Riemannian metric $g$.  
Let $P: TM\to F$ (resp. $P^\perp: TM\to F^\perp$) be the orthogonal projection to $F$ (resp. $F^\perp$). Following \cite[Definition 1.13]{wittendeformationweipingzhang}, we define 
    \begin{align}
        \widetilde{\nabla}^{F^\perp}: \Gamma(TM)\times\Gamma(F^\perp)&\to\Gamma(F^\perp)
    \end{align}
    by letting 
    \begin{align}
    \widetilde{\nabla}^{F^\perp}_X U = 
    \begin{cases}
        P^\perp [X, U] \text{\ \ when\ }X\in\Gamma(F), U\in\Gamma(F^\perp), \\
        P^\perp\nabla^{TM}_X U \text{\ \ when\ }X\in\Gamma(F^\perp), U\in\Gamma(F^\perp).
    \end{cases}
    \end{align}
    Then, we extend it to $\Omega^i(M, F^\perp)$: 
    \begin{align}
        \widetilde{\nabla}^{F^\perp}: \Omega^i(M, F^\perp)&\to\Omega^{i+1}(M, F^\perp) \nonumber\\
        \alpha\otimes U &\mapsto d\alpha\otimes U + (-1)^i \alpha\wedge\widetilde{\nabla}^{F^\perp} U. 
    \end{align}
Let $g^F$ and $g^{F^\perp}$ be the restrictions of $g$ on $F$ and $F^\perp$ respectively. Then, we define the mapping cone Bott connection:
\begin{definition}\normalfont
    We call
    \begin{align}\label{Bott connection cone case expression}
        \widetilde{\mathbb{A}}^{F^\perp}: \Omega^i(M,F^\perp)\oplus\Omega^{i-1}(M,F^\perp)&\to\Omega^{i+1}(M, F^\perp)\oplus\Omega^i(M, F^\perp)  \nonumber\\
        (a, b)&\mapsto (\widetilde{\nabla}^{F^\perp} a + \omega\wedge b, -\widetilde{\nabla}^{F^\perp} b)
    \end{align}
    the mapping cone Bott connection. 
\end{definition}

Using the identification in \cite[(2.5)]{transgression_primitive_2025}, we have: 
\begin{proposition}
    The $(\widetilde{\mathbb{A}}^{F^\perp})^{2k}$ identifies with  
    \begin{align}
        \left((\widetilde{\nabla}^{F^\perp})^{2k}, 0\right)\in \Omega^{2k}(M,\edmp(F^\perp))\oplus\Omega^{2k-1}(M,\edmp(TM))
    \end{align}
    for all $k\geqslant 1$. 
\end{proposition}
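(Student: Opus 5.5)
Write $\nabla:=\widetilde{\nabla}^{F^\perp}$ throughout, and view the mapping cone connection as a $2\times 2$ operator matrix acting on $(a,b)$. Then $\widetilde{\mathbb{A}}^{F^\perp}$ has the matrix form
\begin{align*}
\widetilde{\mathbb{A}}^{F^\perp}=\begin{pmatrix}\nabla & \omega\wedge\\ 0 & -\nabla\end{pmatrix}.
\end{align*}
The plan is a direct computation of the square $(\widetilde{\mathbb{A}}^{F^\perp})^2$, followed by induction on $k$. This mirrors the general computation behind \cite[(2.5)]{transgression_primitive_2025}, specialized to the case $B=0$.

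\textbf{Step 1: the square.} Applying the definition (\ref{Bott connection cone case expression}) twice gives
\begin{align*}
(\widetilde{\mathbb{A}}^{F^\perp})^2(a,b)=\bigl(\nabla^2 a+\nabla(\omega\wedge b)-\omega\wedge\nabla b,\ \nabla^2 b\bigr).
\end{align*}
By the Leibniz rule, $\nabla(\omega\wedge b)=d\omega\wedge b+\omega\wedge\nabla b$, since $\omega$ has even degree and so contributes no sign. Because $d\omega=0$, the cross terms cancel. This leaves
\begin{align*}
(\widetilde{\mathbb{A}}^{F^\perp})^2(a,b)=(\nabla^2 a,\ \nabla^2 b).
\end{align*}
Here $\nabla^2=R$ is $C^\infty(M)$-linear, i.e.\ the curvature $2$-form with values in $\edmp(F^\perp)$. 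So $(\widetilde{\mathbb{A}}^{F^\perp})^2$ is the diagonal operator $\mathrm{diag}(R,R)$.

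\textbf{Step 2: the identification.} Next I recall from \cite[(2.5)]{transgression_primitive_2025} how a tensorial even-degree operator on the cone is identified with a pair. An operator of the form $(a,b)\mapsto(\theta a+\eta\wedge\cdot\,b\text{-terms},\,\theta b+\dots)$ is identified with $(\theta,\eta)$, where the second slot records the off-diagonal lower-left or upper-right component coming from $B$-type terms. For a diagonal operator $\mathrm{diag}(\theta,\theta)$ with $\theta\in\Omega^{2}(M,\edmp(F^\perp))$, this gives $(\theta,0)$. In the general case $\mathbb{A}^2$ has lower component $(a,b)\mapsto(\dots,[\nabla,B]a+\theta b)$, which corresponds to the pair $(R+\omega B,\,[\nabla,B])$. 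Setting $B=0$ yields $(R,0)$, which settles $k=1$.

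\textbf{Step 3: induction.} Powers of $\mathrm{diag}(R,R)$ are $\mathrm{diag}(R^k,R^k)=\mathrm{diag}(\nabla^{2k},\nabla^{2k})$. Under the product structure on pairs, $(R,0)^k=(R^k,0)$, because the second component of each product is $\beta\wedge\gamma\pm\alpha\wedge\delta$ with $\beta=\delta=0$. Hence $(\widetilde{\mathbb{A}}^{F^\perp})^{2k}$ identifies with $((\widetilde{\nabla}^{F^\perp})^{2k},0)$.

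The only delicate point is Step 2: matching the operator-level composition with the product on pairs, and the sign conventions of \cite[(2.5)]{transgression_primitive_2025}. Since everything is diagonal with vanishing second component, signs cannot spoil the conclusion. The zero second slot actually lies in $\Omega^{2k-1}(M,\edmp(F^\perp))\subseteq\Omega^{2k-1}(M,\edmp(TM))$, consistent with the statement as written.
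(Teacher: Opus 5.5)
Your proposal is correct and is essentially the paper's argument. The paper just invokes the identification \cite[(2.5)]{transgression_primitive_2025} with $B=0$; your Steps 1--3 unpack it, using $d\omega=0$ to cancel the off-diagonal term so that $(\widetilde{\mathbb{A}}^{F^\perp})^{2k}=\mathrm{diag}(R^k,R^k)$, which corresponds to $(R^k,0)$. The only slip is a harmless sign in Step 2: for general $B$ the lower-left entry of $\mathbb{A}^2$ is $B\nabla-\nabla B=-[\nabla,B]$, which plays no role once $B=0$.
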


Let $F^{\perp,*}$ be the dual bundle of $F^{\perp}$. We recall the description of $(\widetilde{\nabla}^{F^\perp})^{2k}$ induced by \cite[Lemma 1.14]{wittendeformationweipingzhang}: 
\begin{proposition}
    Under the identification, we have 
    \begin{align}\label{degree of the curvature with B equal to 0}
        (\widetilde{\nabla}^{F^\perp})^{2k} \in \Gamma\left(\Lambda^k F^{\perp,*}\right)\wedge\Omega^*(M, \edmp(F^\perp))
    \end{align}
    for all $k\geqslant 1$.  
\end{proposition}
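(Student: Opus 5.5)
The plan is to reduce to the classical statement \cite[Lemma 1.14]{wittendeformationweipingzhang}: the curvature $R=(\widetilde{\nabla}^{F^\perp})^2$ of the Bott connection satisfies $R(X,Y)=0$ for all $X,Y\in\Gamma(F)$. Equivalently, $R\in\Gamma(F^{\perp,*})\wedge\Omega^1(M,\edmp(F^\perp))$, so every term of $R$ contains at least one covector from $F^{\perp,*}$. Here $F^{\perp,*}$ is identified with the annihilator of $F$ in $T^*M$ via the splitting $TM=F\oplus F^\perp$. Once this holds for $k=1$, the general case follows by multiplicativity. By the previous proposition, $(\widetilde{\nabla}^{F^\perp})^{2k}$ is the $k$-fold wedge power $R^k=R\wedge\cdots\wedge R$ in the algebra $\Omega^*(M,\edmp(F^\perp))$. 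Since $\Gamma(\Lambda^\bullet F^{\perp,*})\wedge\Omega^*(M,\edmp(F^\perp))$ is a filtration by ideals compatible with the wedge product, a product of $k$ factors each lying in the first filtration level lies in level $k$. This gives (\ref{degree of the curvature with B equal to 0}).

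The core step is the $k=1$ vanishing, which I would prove by direct computation. Fix $X,Y\in\Gamma(F)$ and $U\in\Gamma(F^\perp)$. By definition,
\begin{align*}
R(X,Y)U=P^\perp[X,P^\perp[Y,U]]-P^\perp[Y,P^\perp[X,U]]-P^\perp[[X,Y],U],
\end{align*}
and the last term uses the first case of the definition because $[X,Y]\in\Gamma(F)$ by integrability. Write $P^\perp[Y,U]=[Y,U]-P[Y,U]$. The correction term contributes $P^\perp[X,P[Y,U]]$, which vanishes because $X$ and $P[Y,U]$ are both sections of $F$, so their bracket lies in $F$. The same holds with $X$ and $Y$ swapped. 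What remains is $P^\perp\bigl([X,[Y,U]]-[Y,[X,U]]-[[X,Y],U]\bigr)=0$ by the Jacobi identity. Hence $R(X,Y)=0$.

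To turn this pointwise vanishing into the wedge statement, I would pick a local frame $\{f_j\}$ of $F$ and $\{e_a\}$ of $F^\perp$, with dual coframe $\{f^j,e^a\}$ adapted to the splitting, so that the $e^a$ annihilate $F$. Then $R=\sum_{a<b}R(e_a,e_b)e^a\wedge e^b+\sum_{j,a}R(f_j,e_a)f^j\wedge e^a+\sum_{i<j}R(f_i,f_j)f^i\wedge f^j$. The last sum is zero, so each surviving term contains some $e^a\in F^{\perp,*}$. The degree count then works as follows: each factor of $R^k$ is a sum of terms of the form $e^a\wedge(\cdot)$, so expanding $R^k$ gives terms each containing a wedge of $k$ covectors from $F^{\perp,*}$. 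These covectors can be collected to the front, at the cost of signs, since the coefficients are even-degree-graded-commutative forms. This is just bookkeeping.

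I expect the main obstacle to be the identification issue rather than any geometric difficulty. One must check that the identification \cite[(2.5)]{transgression_primitive_2025} used in the previous proposition really matches $(\widetilde{\nabla}^{F^\perp})^{2k}$ with the ordinary $k$-th wedge power of the curvature 2-form. One must also check that $F^{\perp,*}$ is interpreted as $\mathrm{Ann}(F)\subset T^*M$, so that $\Gamma(\Lambda^kF^{\perp,*})$ sits inside $\Omega^k(M)$. Once these conventions are fixed, the argument is the classical Bott computation above.
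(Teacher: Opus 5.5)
Your proposal is correct and follows the paper's route: the paper simply invokes \cite[Lemma 1.14]{wittendeformationweipingzhang}, which is the vanishing of the Bott curvature on pairs of sections of $F$ (this uses integrability), and then passes to $k$-th powers. Your Jacobi-identity computation is exactly the proof of that cited lemma, and your filtration argument is the standard step from $k=1$ to general $k$; the only small fix is that $(\widetilde{\nabla}^{F^\perp})^{2k}=R^k$ comes from $(\widetilde{\nabla}^{F^\perp})^2$ acting as multiplication by $R$, not from the preceding proposition about $\widetilde{\mathbb{A}}^{F^\perp}$.
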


\begin{remark}
\normalfont
    We need the integrability of the bundle $F$ to ensure (\ref{degree of the curvature with B equal to 0}) because the proof of \cite[Lemma 1.14]{wittendeformationweipingzhang} relies on $[X_1, X_2]\in \Gamma(F)$ for all $X_1, X_2\in\Gamma(F)$.
\end{remark}

By the degree of each $p_i(\widetilde{\mathbb{A}}^{F^\perp})$ together with (\ref{degree of the curvature with B equal to 0}), we see that 
\begin{align}
    p_{i_1}(\widetilde{\mathbb{A}}^{F^\perp})\cdots p_{i_k}(\widetilde{\mathbb{A}}^{F^\perp}) = (\alpha, 0)
\end{align}
with $\alpha\in \Gamma\left(\Lambda^{2(i_1+\cdots + i_k)}F^{\perp,*}\right) \wedge \Omega^*(M)$.
If we have 
\begin{align}
    2i_1+\cdots+2i_k > \rank (F^{\perp, *}) = \dim M - \rank F,
\end{align}
then we find
\begin{align}
    p_{i_1}(\widetilde{\mathbb{A}}^{F^\perp})\cdots p_{i_k}(\widetilde{\mathbb{A}}^{F^\perp}) = (0, 0).
\end{align}
This means $\widetilde{\mathbb{A}}^{F^\perp}$ satisfies the requirement of (\ref{the technique by Bott}). Theorem \ref{main result 1} is now proved. 

\section{Adiabatic limit}\label{adiabatic limit section}
In this section, we let $\Phi\in\Gamma(\edmp(TM))$ be a skew-adjoint section of endomorphism with respect to the original $g$. Let $\varepsilon>0$ be the parameter. 
 After rescaling the Riemannian metric $g$ into 
 \begin{align}
     g^\varepsilon = g^F \oplus \dfrac{1}{\varepsilon} g^{F^\perp}, 
 \end{align}
 we see that the subbundles $F$ and $F^\perp$ and the projections $P$ and $P^\perp$ remain unchanged for all $\varepsilon>0$, while
 \begin{align}
     \Phi^\varepsilon \coloneqq P\Phi + \varepsilon P^\perp\Phi
 \end{align}
 is skew-adjoint with respect to $g^\varepsilon$. 
 Let $\nabla^{TM,\varepsilon}$ be the Levi-Civita connection on $TM$ associated to $g^\varepsilon$. We have the mapping cone analogue of the Levi-Civita connection: 
 \begin{align}
     \mathbb{A}^{TM,\varepsilon}: \Omega^i(M, TM)\oplus\Omega^{i-1}(M, TM)&\to \Omega^{i+1}(M, TM)\oplus\Omega^i(M, TM) \nonumber\\
     (a,b)&\mapsto (\nabla^{TM,\varepsilon}a + \omega\wedge b, \Phi^\varepsilon a - \nabla^{TM,\varepsilon} b)
 \end{align}
 associated to $\nabla^{TM,\varepsilon}$ and $\Phi^\varepsilon$. We will verify the adiabatic limit in Theorem \ref{main result 2} of the restriction of $\mathbb{A}^{TM,\varepsilon}$ to $F^\perp$, 
 generalizing \cite[Theorem 1.1]{liuandzhang1999}.
 
  Both $P^\perp$ and $P$ extend to $\Omega^i(M, TM)\oplus\Omega^{i-1}(M, TM)$ in this way: For all $\alpha\in\Omega^i(M)$, $\beta\in\Omega^{i-1}(M)$, and $X, Y\in\Gamma(TM)$: 
\begin{align}
    P^\perp(\alpha\otimes X, \beta\otimes Y) =\ & (\alpha\otimes P^\perp(X), \beta\otimes P^\perp(Y)), \label{extend the projection naturally on F perp}\\
    P(\alpha\otimes X, \beta\otimes Y) =\ & (\alpha\otimes P(X), \beta\otimes P(Y)).
\end{align}
\begin{definition}
    We call 
    \begin{align}
        \mathbb{A}^{F^\perp,\varepsilon}\coloneqq P^\perp\mathbb{A}^{TM,\varepsilon}P^\perp: \Omega^i(M, F^\perp)\oplus\Omega^{i-1}(M, F^\perp)&\to\Omega^{i+1}(M, F^\perp)\oplus\Omega^i(M, F^\perp)
    \end{align}
    the restriction of $\mathbb{A}^{TM,\varepsilon}$ to $F^\perp$. 
\end{definition}

As \cite[(1.33)]{wittendeformationweipingzhang}, we let $\widetilde{\nabla}^{F^\perp,*}$ be the dual connection on $F^\perp$ of the Bott connection $\widetilde{\nabla}^{F^\perp}$ in the following sense: For any $U, V\in\Gamma(F)$, 
\begin{align}
    d(g(U, V)) = g(\widetilde{\nabla}^{F^\perp}U, V) + g(U, \widetilde{\nabla}^{F^\perp, *}V).
\end{align}
Now, we extend $g$ to $\sum_{i = 0}^{\dim M + 1}\Omega^i(M,F^\perp)\oplus\Omega^{i-1}(M,F^\perp)$
by 
\begin{align}\label{extend the metric}
    &\ g((\alpha\otimes U, \beta\otimes V), (\gamma\otimes X, \delta\otimes Y)) \nonumber\\
    \coloneqq & \left( g(U,X)\alpha\wedge\gamma + (-1)^{i} g(U, Y)\alpha\wedge\delta,\ g(V, X)\beta\wedge\gamma \right)
\end{align}
for all $\alpha\in\Omega^i(M), \beta\in\Omega^{i-1}(M), \gamma\in\Omega^j(M), \delta\in\Omega^{j-1}(M)$ and $U, V, X, Y\in\Gamma(F^\perp)$. 
Notice that (\ref{extend the metric}) gives us a pair of $F^\perp$-valued forms instead of a function. 

Taking degrees into consideration, the dual $\widetilde{\mathbb{A}}^{F^\perp,*}$ of $\widetilde{\mathbb{A}}^{F^\perp}$ should satisfy
\begin{align}
    d^\omega g\left((a, b), (r, s)\right) = g\left(\widetilde{\mathbb{A}}^{F^\perp}(a, b), (r, s)\right) + (-1)^i g\left((a, b), \widetilde{\mathbb{A}}^{F^\perp,*}(r,s)\right)
\end{align}
for all $(a, b)\in\Omega^i(M, F^\perp)\oplus\Omega^{i-1}(M, F^\perp)$ and $(r, s)\in\Omega^j(M, F^\perp)\oplus\Omega^{j-1}(M, F^\perp)$. 

\begin{proposition}\label{dual connection verification process}
The mapping cone connection $\widetilde{\mathbb{A}}^{F^\perp,*}$ is given by 
    \begin{align}\label{extended dual connection}
        \widetilde{\mathbb{A}}^{F^\perp,*}: \Omega^i(M,F^\perp)\oplus\Omega^{i-1}(M,F^\perp)&\to\Omega^{i+1}(M, F^\perp)\oplus\Omega^i(M, F^\perp)  \nonumber\\
        (a, b)&\mapsto (\widetilde{\nabla}^{F^\perp,*} a + \omega\wedge b, -\widetilde{\nabla}^{F^\perp,*} b).
    \end{align}
\end{proposition}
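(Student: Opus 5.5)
We verify the defining identity directly. Set $\widetilde{\mathbb{A}}^{F^\perp,*}$ equal to the right side of (\ref{extended dual connection}) and check
\begin{align*}
    d^\omega g\left((a, b), (r, s)\right) = g\left(\widetilde{\mathbb{A}}^{F^\perp}(a, b), (r, s)\right) + (-1)^i g\left((a, b), \widetilde{\mathbb{A}}^{F^\perp,*}(r,s)\right)
\end{align*}
for all $(a,b)\in\Omega^i(M,F^\perp)\oplus\Omega^{i-1}(M,F^\perp)$ and $(r,s)\in\Omega^j(M,F^\perp)\oplus\Omega^{j-1}(M,F^\perp)$. Both sides are $\Omega^0(M)$-bilinear in the appropriate graded sense, and each Bott-type connection satisfies the Leibniz rule. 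It therefore suffices to take decomposable elements $a=\alpha\otimes U$, $b=\beta\otimes V$, $r=\gamma\otimes X$, $s=\delta\otimes Y$. A uniqueness statement is not needed: the proposition only asserts that this formula satisfies the duality relation.

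The first ingredient is the graded Leibniz rule for the pointwise pairing. Extending $g$ from $F^\perp$ to $F^\perp$-valued forms by $g(\alpha\otimes U,\gamma\otimes X)=g(U,X)\alpha\wedge\gamma$, the defining property of $\widetilde{\nabla}^{F^\perp,*}$ gives
\begin{align*}
d\,g(a,r)=g(\widetilde{\nabla}^{F^\perp}a,r)+(-1)^{\deg a}g(a,\widetilde{\nabla}^{F^\perp,*}r).
\end{align*}
This follows by expanding $d(g(U,X)\alpha\wedge\gamma)$ and using $\widetilde{\nabla}^{F^\perp}(\alpha\otimes U)=d\alpha\otimes U+(-1)^i\alpha\wedge\widetilde{\nabla}^{F^\perp}U$. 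Here I read the duality relation for all sections $U,V$ of $F^\perp$; the "$\Gamma(F)$" appearing there is evidently meant to be $\Gamma(F^\perp)$.

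Next, write the pairing (\ref{extend the metric}) as $g((a,b),(r,s))=(g(a,r)+(-1)^i g(a,s),\ g(b,r))$. Apply $d^\omega(x,y)=(dx+\omega\wedge y,-dy)$ and expand each component with the Leibniz rule above. On the right-hand side, substitute $\widetilde{\mathbb{A}}^{F^\perp}(a,b)=(\widetilde{\nabla}a+\omega\wedge b,-\widetilde{\nabla}b)$, which has degree $i+1$, and $\widetilde{\mathbb{A}}^{F^\perp,*}(r,s)=(\widetilde{\nabla}^*r+\omega\wedge s,-\widetilde{\nabla}^*s)$. Then match terms in three groups:
\begin{itemize}
\item[(i)] derivative terms $g(\widetilde{\nabla}a,\cdot)$ and $g(\cdot,\widetilde{\nabla}^*\cdot)$, which come from $d$ of the pairing;
\item[(ii)] $\omega$-terms coming from $\omega\wedge g(b,r)$ in $d^\omega$, from $g(\omega\wedge b,r)$, and from $g(a,\omega\wedge s)$;
\item[(iii)] the cross terms in the second slot.
\end{itemize}
Since $\omega$ has even degree, it commutes past forms without introducing signs. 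So the $\omega$-terms should match as $\omega\wedge g(b,r)=g(\omega\wedge b,r)$, with the $g(a,\omega\wedge s)$ contributions cancelling against their counterpart.

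The main obstacle is the sign bookkeeping. The extended pairing carries the factor $(-1)^i$ on its mixed term, the right-hand side carries the overall factor $(-1)^i$, and the degree of $\widetilde{\mathbb{A}}^{F^\perp}(a,b)$ shifts $i$ to $i+1$. I would compare components one at a time: first the second component, which is just $-d\,g(b,r)$, and then the first component. If a sign mismatch appears, I would revisit the convention for the product and pairing in \cite[(2.5)]{transgression_primitive_2025}. Specifically, the sign in the second component of $\widetilde{\mathbb{A}}^{F^\perp}$ is $-\widetilde{\nabla}$, and this should be exactly what absorbs the $(-1)^i$ discrepancies.
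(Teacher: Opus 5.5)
\textbf{Gap: with your pairing the identity is false.} Your strategy is the paper's: verify the duality relation directly on decomposable elements. Organizing it through the graded Leibniz rule $d\,g(a,r)=g(\widetilde\nabla a,r)+(-1)^{\deg a}g(a,\widetilde\nabla^{*}r)$ (with $\widetilde\nabla=\widetilde{\nabla}^{F^\perp}$, $\widetilde\nabla^{*}=\widetilde{\nabla}^{F^\perp,*}$) is cleaner than the paper's full expansion. However, you never do the matching. With the pairing you commit to, $g((a,b),(r,s))=(g(a,r)+(-1)^i g(a,s),\,g(b,r))$, the matching cannot succeed, and the obstruction is one of degree rather than sign. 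The form $g(a,r)$ has degree $i+j$ while $g(a,s)$ has degree $i+j-1$, so your first slot is not homogeneous. Write LHS for $d^\omega g((a,b),(r,s))$ and RHS for $g(\widetilde{\mathbb{A}}^{F^\perp}(a,b),(r,s))+(-1)^i g((a,b),\widetilde{\mathbb{A}}^{F^\perp,*}(r,s))$. Expanding with your pairing and your Leibniz rule gives
\begin{align*}
\text{RHS}-\text{LHS}=\bigl(2(-1)^{i+1}d\,g(a,s)+(-1)^{i+1}\omega\wedge g(b,s)+(-1)^{i}\omega\wedge g(a,s),\ (-1)^{i}\omega\wedge g(b,s)\bigr).
\end{align*}
This is nonzero in general, even for $b=0$. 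In particular, the term $g(a,\omega\wedge s)$ has nothing to cancel against, contrary to your group (ii). The $-\widetilde\nabla$ in the second slot of $\widetilde{\mathbb{A}}^{F^\perp}$ is already accounted for above, so your fallback of revisiting signs cannot repair this.

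\textbf{The fix: put the mixed term in the second slot.} This matches the cone product $(\alpha,\beta)\cdot(\gamma,\delta)=(\alpha\wedge\gamma,\beta\wedge\gamma+(-1)^{\deg\alpha}\alpha\wedge\delta)$, so the pairing should be $g((a,b),(r,s))=(g(a,r),\,g(b,r)+(-1)^{i}g(a,s))$. The displayed (\ref{extend the metric}) literally puts $(-1)^i g(U,Y)\alpha\wedge\delta$ in the first slot, so your reading was faithful to the text. The paper's own verification, however, places every $\alpha\wedge\delta$ term in the second component, so it tacitly uses the corrected pairing. With it, your lemma closes the argument at once:
\begin{align*}
\text{first slot:}\quad & g(\widetilde\nabla a,r)+(-1)^i g(a,\widetilde\nabla^{*}r)+\omega\wedge g(b,r)+(-1)^i\omega\wedge g(a,s)\\
&\quad=d\,g(a,r)+\omega\wedge\bigl(g(b,r)+(-1)^i g(a,s)\bigr),\\
\text{second slot:}\quad & \bigl[-g(\widetilde\nabla b,r)+(-1)^i g(b,\widetilde\nabla^{*}r)\bigr]+\bigl[(-1)^{i+1}g(\widetilde\nabla a,s)-g(a,\widetilde\nabla^{*}s)\bigr]\\
&\quad+\bigl[(-1)^{i+1}+(-1)^{i}\bigr]\omega\wedge g(b,s)=-d\,g(b,r)-(-1)^i d\,g(a,s).
\end{align*}
This uses $\deg b=i-1$ and $\deg a=i$. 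The sign $(-1)^{i+1}$ on $g(\widetilde\nabla a+\omega\wedge b,s)$ comes from $\deg\widetilde{\mathbb{A}}^{F^\perp}(a,b)=i+1$. So the correct $\omega$-bookkeeping is this: $g(a,\omega\wedge s)$ matches the $\omega\wedge y$ part of $d^\omega$ acting on the mixed term. The terms that actually cancel are the two copies of $\omega\wedge g(b,s)$ in the second slot.
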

\begin{proof}
    Let $\alpha\in\Omega^i(M), \beta\in\Omega^{i-1}(M), \gamma\in\Omega^j(M), \delta\in\Omega^{j-1}(M)$, and $U, V, X, Y\in \Gamma(F^\perp)$. Then, we find 
    \begin{align}
           & g\left(\widetilde{\mathbb{A}}^{F^\perp}(\alpha\otimes U, \beta\otimes V), (\gamma\otimes X, \delta\otimes Y)\right)  \nonumber\\
        =\ & \left(d\alpha\wedge\gamma\wedge g(U,X) + (-1)^{i+j}\alpha\wedge\gamma \wedge g(\widetilde{\nabla}^{F^\perp} U, X) + \omega\wedge\beta\gamma\wedge g(V,X), \right.\nonumber\\
       &\ \  -d\beta\wedge\gamma\wedge g(V,X) + (-1)^{i+j}\beta\wedge\gamma\wedge g(\widetilde{\nabla}^{F^\perp} V, X) + (-1)^{i+1}d\alpha\wedge\delta\wedge g(U,Y) \nonumber\\
       &\ \ \ \ \ \ \ \ \ \ \ \ \ \ \ \ \ \ \left.+ (-1)^j\alpha\wedge\delta\wedge g(\widetilde{\nabla}^{F^\perp} U, Y) + (-1)^{i+1}\omega\wedge\beta\wedge\delta\wedge g(V,Y)\right).
    \end{align}
    Using (\ref{extended dual connection}), we find 
    \begin{align}
        & g\left((\alpha\otimes U, \beta\otimes V), \widetilde{\mathbb{A}}^{F^\perp,*}(\gamma\otimes X, \delta\otimes Y)\right)  \nonumber\\
        =\ & \left(\alpha\wedge d\gamma \wedge g(U,X) + (-1)^j\alpha\wedge\gamma\wedge g(U,\widetilde{\nabla}^{F^\perp,*}X) + \alpha\wedge\omega\wedge\delta \wedge g(U,Y), \right.\nonumber\\
       &\ \  -(-1)^i\alpha\wedge d\delta \wedge g(U, Y) + (-1)^{i+j}\alpha\wedge\delta\wedge  g(U,\widetilde{\nabla}^{F^\perp,*}Y)+\beta\wedge d\gamma \wedge g(V,X) \nonumber\\
       &\ \ \ \ \ \ \ \ \ \ \ \ \ \ \ \ \ \ \left.+ (-1)^j\beta\wedge\gamma \wedge g(V, \widetilde{\nabla}^{F^\perp,*}X) + \beta\wedge\omega\wedge\delta \wedge g(V,Y)\right).
    \end{align}
    Thus, we have 
    \begin{align}
        & g\left(\widetilde{\mathbb{A}}^{F^\perp}(\alpha\otimes U, \beta\otimes V), (\gamma\otimes X, \delta\otimes Y)\right) + (-1)^i g\left((\alpha\otimes U, \beta\otimes V), \widetilde{\mathbb{A}}^{F^\perp,*}(\gamma\otimes X, \delta\otimes Y)\right) \nonumber\\
        =\ & \left(d(\alpha\wedge\gamma\wedge g(U,X)) + \omega\wedge\beta\wedge\gamma\wedge g(V,X) + (-1)^i\omega\wedge\alpha\wedge\delta\wedge g(U,Y),\right. \nonumber\\
        & \ \ \ \ \ \ \ \ \ \ \ \ \ \ \ \ \ \ \ \ \ \ \ \ \ \ \ \ \ \ \ \ \left.-d(\beta\wedge\gamma\wedge g(V,X)) - (-1)^i d(\alpha\wedge\delta\wedge g(U,Y)\right) \nonumber\\
        =\ & d^\omega g\left((\alpha\otimes U, \beta\otimes V), (\gamma\otimes X, \delta\otimes Y)\right).
    \end{align}
    This shows that $\widetilde{\mathbb{A}}^{F^\perp,*}$ is the dual mapping cone connection we need. 
\end{proof}

Now we go back to $\mathbb{A}^{F^\perp,\varepsilon}$. For any $(\gamma\otimes U, \delta\otimes V)\in\Omega^i(M, F^\perp)\oplus\Omega^{i-1}(M, F^\perp)$, we have 
\begin{align}\label{image under the mapping cone connection restricted on F perp}
    & \mathbb{A}^{F^\perp,\varepsilon}(\gamma\otimes U, \delta\otimes V) \nonumber\\
    =\ & (d\gamma\otimes U + (-1)^i\gamma\wedge P^\perp(\nabla^{TM, \varepsilon}U) + (\omega\wedge\delta)\otimes V, \nonumber\\
   & \ \ \  \varepsilon \gamma\otimes P^\perp\Phi(U) - d\delta\otimes V - (-1)^{i-1}\delta\wedge P^\perp(\nabla^{TM,\varepsilon}V)). 
\end{align}
\begin{remark}
    \normalfont From (\ref{image under the mapping cone connection restricted on F perp}), we see $\mathbb{A}^{F^\perp,\varepsilon}$ is also a mapping cone connection. 
\end{remark}

Recall the following adiabatic limit \cite[Theorem 1.15]{wittendeformationweipingzhang}: 
\begin{theorem}[Liu-Zhang \cite{liuandzhang1999}, 2001]\label{adiabatic limit original}
    For all $U, V\in\Gamma(F^\perp)$ and $X\in \Gamma(F)$, when $\varepsilon\to 0$, we have 
    \begin{align}
        g\left(\nabla^{TM, \varepsilon}_X U, V\right) \to \dfrac{1}{2}g\left(\widetilde{\nabla}^{F^\perp}_X U+\widetilde{\nabla}^{F^\perp, *}_X U, V\right)
    \end{align}
    uniformly on $M$. 
\end{theorem}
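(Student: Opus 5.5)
The plan is to compute $g(\nabla^{TM,\varepsilon}_X U, V)$ exactly using the Koszul formula for the rescaled metric $g^\varepsilon = g^F\oplus\frac{1}{\varepsilon}g^{F^\perp}$. This should show that it equals the desired limit plus an explicit error term of order $\varepsilon$. Throughout, $X\in\Gamma(F)$ and $U,V\in\Gamma(F^\perp)$. In the definition of the dual connection $\widetilde{\nabla}^{F^\perp,*}$, I read the sections $U,V$ as lying in $\Gamma(F^\perp)$, since that is where $\widetilde{\nabla}^{F^\perp}$ acts.

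First, since $F\perp F^\perp$ for every $g^\varepsilon$ and $V\in F^\perp$, we have
\begin{align}
g^\varepsilon\left(\nabla^{TM,\varepsilon}_X U, V\right) = \dfrac{1}{\varepsilon}\, g\left(\nabla^{TM,\varepsilon}_X U, V\right).
\end{align}
Hence it suffices to compute $2\varepsilon\, g^\varepsilon(\nabla^{TM,\varepsilon}_X U,V)$. The Koszul formula gives
\begin{align}
2g^\varepsilon(\nabla^{TM,\varepsilon}_X U,V) =\ & X g^\varepsilon(U,V) + U g^\varepsilon(X,V) - V g^\varepsilon(X,U) \nonumber\\
& + g^\varepsilon([X,U],V) - g^\varepsilon([X,V],U) - g^\varepsilon([U,V],X).
\end{align}
Here $g^\varepsilon(X,V)=g^\varepsilon(X,U)=0$, the terms pairing $F^\perp$-vectors carry the factor $\frac1\varepsilon$, and the last term equals $g([U,V],X)$ with no $\frac1\varepsilon$, because $X\in F$. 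Multiplying by $\varepsilon/2$ therefore yields the exact identity
\begin{align}
g\left(\nabla^{TM,\varepsilon}_X U, V\right) = \dfrac12\Big( X g(U,V) + g(P^\perp[X,U],V) - g(U,P^\perp[X,V])\Big) - \dfrac{\varepsilon}{2}\, g([U,V],X).
\end{align}

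Next, identify the bracketed term with the Bott connection and its dual. By definition $\widetilde{\nabla}^{F^\perp}_X U = P^\perp[X,U]$ for $X\in\Gamma(F)$, and the defining relation of the dual connection gives
\begin{align}
g(\widetilde{\nabla}^{F^\perp,*}_X U, V) = X g(U,V) - g(U, \widetilde{\nabla}^{F^\perp}_X V) = X g(U,V) - g(U,P^\perp[X,V]).
\end{align}
So the bracket is exactly $g(\widetilde{\nabla}^{F^\perp}_X U + \widetilde{\nabla}^{F^\perp,*}_X U, V)$, and
\begin{align}
g\left(\nabla^{TM,\varepsilon}_X U, V\right) - \dfrac12 g\left(\widetilde{\nabla}^{F^\perp}_X U+\widetilde{\nabla}^{F^\perp,*}_X U, V\right) = -\dfrac{\varepsilon}{2}\, g([U,V],X).
\end{align}

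Finally, $g([U,V],X)$ is a fixed smooth function independent of $\varepsilon$. On the closed manifold $M$ it is bounded, so the right-hand side tends to $0$ uniformly as $\varepsilon\to0$.

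There is no real analytic obstacle here. The only step needing care is the bookkeeping of the powers of $\varepsilon$ in the Koszul formula: one must check that exactly one term, the one involving $[U,V]$ paired against $F$, fails to carry $\frac1\varepsilon$, and that the left-hand side measured in $g$ rather than $g^\varepsilon$ picks up precisely the factor $\varepsilon$. Notably, integrability of $F$ is not needed for this statement.
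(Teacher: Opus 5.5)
Your argument is correct. The Koszul computation for $g^\varepsilon$ gives the exact identity with error term $-\frac{\varepsilon}{2}g([U,V],X)$, which tends to zero uniformly on the closed manifold $M$. Reading $U,V\in\Gamma(F^\perp)$ in the paper's definition of the dual connection is the right fix of a typo there.

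The paper itself gives no proof of this statement; it simply quotes Liu--Zhang (Theorem 1.15 in Zhang's book). As far as I can tell, the standard proof there is this same Koszul-formula computation, so your route should coincide with it.
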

Therefore by Theorem \ref{adiabatic limit original}, for all $Y\in\Gamma(F^\perp)$ and $X_1, \cdots, X_{i+1}\in\Gamma(F)$, the first entry in (\ref{image under the mapping cone connection restricted on F perp}) satisfies
\begin{align}\label{limit 1}
    & \lim_{\varepsilon\to 0} g\left(X_1\lrcorner X_2\lrcorner \cdots X_{i+1}\lrcorner\left(d\gamma\otimes U + (-1)^i\gamma\wedge P^\perp(\nabla^{TM, \varepsilon}U) + (\omega\wedge\delta)\otimes V\right), Y\right)  \nonumber\\
    =\ & g\left(X_1\lrcorner X_2\lrcorner \cdots X_{i+1}\lrcorner\Big(d\gamma\otimes U + (-1)^i\gamma\wedge \dfrac{1}{2}(\widetilde{\nabla}^{F^\perp}U + \widetilde{\nabla}^{F^\perp,*}U) + (\omega\wedge\delta)\otimes V\Big), Y\right), 
\end{align}
and the second entry in (\ref{image under the mapping cone connection restricted on F perp}) satisfies
\begin{align}\label{limit 2}
    & \lim_{\varepsilon\to 0} g\left(X_1\lrcorner X_2\lrcorner \cdots X_{i}\lrcorner\left(\varepsilon \gamma\otimes P^\perp\Phi(U) - d\delta\otimes V - (-1)^{i-1}\delta\wedge P^\perp(\nabla^{TM,\varepsilon}V)\right), Y\right)  \nonumber\\
    =\ & g\left(X_1\lrcorner X_2\lrcorner \cdots X_{i}\lrcorner\Big(- d\delta\otimes V - (-1)^{i-1}\delta\wedge \dfrac{1}{2}(\widetilde{\nabla}^{F^\perp}V + \widetilde{\nabla}^{F^\perp,*}V)\Big), Y\right).
\end{align}
Meanwhile, by (\ref{Bott connection cone case expression}) and (\ref{extended dual connection}), we find
\begin{align}\label{limit 3}
       & \left(\dfrac{1}{2}\widetilde{\mathbb{A}}^{F^\perp} + \dfrac{1}{2}\widetilde{\mathbb{A}}^{F^\perp,*}\right)(\gamma\otimes U, \delta\otimes V) \nonumber\\
    =\ &  \left(d\gamma\otimes U + (-1)^i\gamma\wedge \dfrac{1}{2}(\widetilde{\nabla}^{F^\perp}U + \widetilde{\nabla}^{F^\perp,*}U) + (\omega\wedge\delta)\otimes V, \right. \nonumber\\
    &\ \ \ \ \ \ \ \ \ \ \ \ \ \ \ \ \ \ \left.- d\delta\otimes V - (-1)^{i-1}\delta\wedge \dfrac{1}{2}(\widetilde{\nabla}^{F^\perp}V + \widetilde{\nabla}^{F^\perp,*}V)\right).
\end{align}
 Theorem \ref{main result 2} is now proved by (\ref{limit 1}), (\ref{limit 2}), and (\ref{limit 3}). 

\begin{remark}
\normalfont
    Our computation uses the local expression. In fact, the compactness of $M$ ensures that the convergence in Theorem \ref{main result 2} holds true uniformly on $M$.  
\end{remark}
 
We end this paper by the following comment. Based on \cite{tanaka_tseng_2018}, we may view
\begin{align}
    (0,1)\in\Omega^1(M)\oplus\Omega^0(M)
\end{align}
intuitively as a globally defined angular form $\theta$ on a circle bundle $S$ over $M$. By taking the dual, we obtain a globally defined vector field $\Theta$ on $S$. This vector field automatically gives a foliation whose codimension in $S$ is equal to $\dim M$. Now, here is the question, when we are given the integrable $F\subseteq TM$, is $F\oplus \Theta$ an integrable subbundle of $TS$?  

For the situation in this paper, we do not have a rigorous precise expression of $\Theta$, so we just intuitively view $F\oplus\Theta$ as an integrable subbundle of $TS$. The contractions in (\ref{the second line of main result second}) should be intuitively understood as 
\begin{align}
    \cdots X_1\lrcorner X_2\lrcorner\cdots X_i\lrcorner\Theta\lrcorner \cdots. 
\end{align}
In addition, the only reasonable metric that we can put on
$
    \Omega^i(M, F^\perp)\oplus\Omega^{i-1}(M, F^\perp)
$
is the product metric. The intuitions are rigorously realized through the mapping cone structure together with this product metric.

However, what about the symplectic situation? When $\omega$ is symplectic and integral, the objects $S$, $\theta$, and $\Theta$ all have rigorous constructions. If we want to study the interaction between geometry and topology on $S$, this $F\oplus \Theta\subseteq TS$ becomes an interesting starting point. In addition, although the product metric brings a lot of convenience on the topological side, in this symplectic situation, $\Theta$ involves both the horizontal direction and the fiber direction, and the product metric is not enough anymore. For more interesting geometry, we need a more refined metric. A recent work \cite{refined_metric_on_mapping_cone} has already applied such a metric to the symplectic Morse theory. 

\section*{Acknowledgments}
I thank Prof. Hongzhi Liu, Prof. Xiaobo Liu, Prof. Yu Qiao, Prof. Xiang Tang, Prof. Li-Sheng Tseng, Prof. Hang Wang, Prof. Shanwen Wang, Dr. Jinxuan Chen, and Dr. Zijing Wang for relevant discussions. Also, I thank Beijing International Center for Mathematical Research for the vibrating working environment.  

\bibliographystyle{abbrv}
\bibliography{mybib.bib}
\AuthorInfo
\end{document}